\documentclass[11pt]{article}

\usepackage[T1]{fontenc}
\usepackage{lmodern}
\usepackage{amsmath,amssymb,amsthm}
\usepackage{microtype}
\usepackage[margin=1.15in]{geometry}
\usepackage[hidelinks]{hyperref}

\newtheorem{theorem}{Theorem}[section]
\newtheorem{proposition}[theorem]{Proposition}
\newtheorem{lemma}[theorem]{Lemma}

\newcommand{\FF}{\mathcal F}
\newcommand{\EE}{\mathcal E}
\newcommand{\LL}{\mathcal L}
\newcommand{\TT}{\mathcal T}
\newcommand{\eps}{\varepsilon}

\title{Fibonacci, Dirichlet, and Gauss in a single sum}
\author{Beno\^it Cloitre\\[2pt]
\small\href{https://orcid.org/0009-0001-6778-153X}{ORCID: 0009-0001-6778-153X}}
\date{}

\begin{document}

\maketitle

\begin{abstract}
We study the fractional-part sums $\sum_{k=1}^{n}\{F_n/F_k\}$, where $F_n$
is the $n$th Fibonacci number. Their asymptotic behavior depends on the
parity of $n$. For odd $n$, the remainder is expressed in terms of the
Gauss circle error term. For even $n$, it is expressed in terms of the
Dirichlet divisor error term. Thus determining the optimal remainder exponent
for the odd Fibonacci sums is equivalent to the Gauss circle problem, while
the corresponding question for the even sums is equivalent to the Dirichlet
divisor problem. We also prove analogous formulas for a family of second-order
recurrences, including the Lucas sequence, for which the roles of the two
parities are exchanged.
\end{abstract}

\noindent
\emph{Keywords.} Fibonacci numbers, Lucas numbers, fractional parts,
Dirichlet divisor problem, Gauss circle problem.

\smallskip
\noindent
\emph{2020 mathematics subject classification.} 11B39, 11N37, 11P21.

\section{Introduction}

The Fibonacci sequence $(F_j)_{j\geq 0}$ is defined by
\[
 F_0=0,\qquad F_1=1,\qquad F_{j+2}=F_{j+1}+F_j
 \quad (j\geq 0).
\]
For every $n\in\mathbb N=\{1,2,3,\ldots\}$, we study the sum
\begin{equation}\label{eq:def-F-sum}
 \FF(n)=\sum_{k=1}^{n}\left\{\frac{F_n}{F_k}\right\},
\end{equation}
where $\{x\}=x-\lfloor x\rfloor$ is the fractional part of the real
number $x$ and $\lfloor x\rfloor$ is the greatest integer not exceeding
$x$. Throughout the paper, $\log$ denotes the natural logarithm. The sum
\eqref{eq:def-F-sum} arose in a broader study of Tauberian and Abelian
sums in Part~IV of the author's monograph in preparation \cite{CloitreRAF}.

The asymptotic behavior of \eqref{eq:def-F-sum} brings together two
classical lattice-point problems. Write $\#S$ for the number of elements
of a finite set $S$. For every real number $x\geq1$, define
\begin{align*}
 N_H(x)&=\#\{(a,b)\in\mathbb N^2:ab\leq x\},\\
 N_C(x)&=\#\{(a,b)\in\mathbb Z^2:a^2+b^2\leq x\}.
\end{align*}
The function $N_H(x)$ counts positive lattice points on or below the
hyperbola $ab=x$, while $N_C(x)$ counts integer lattice points inside or on
the circle $a^2+b^2=x$. These geometric counts also have arithmetic forms. Let
$\tau(j)$ denote the number of positive divisors of $j$. Counting the points
under the hyperbola first by their first coordinate, and then by their product,
gives
\[
 N_H(x)=\sum_{k\leq x}\left\lfloor\frac{x}{k}\right\rfloor
       =\sum_{j\leq x}\tau(j).
\]
Jacobi's two-square identity gives the corresponding floor sum for the circle
\cite{HardyCircle,Hirschhorn},
\[
 N_C(x)=1+4\sum_{m\geq0}
 \left(
  \left\lfloor\frac{x}{4m+1}\right\rfloor
  -\left\lfloor\frac{x}{4m+3}\right\rfloor
 \right).
\]
The two residue classes modulo $4$ in this identity reappear in the
odd-indexed Fibonacci sum.

Let $\gamma$ denote Euler's constant. Define the error terms $\Delta_H$ and
$\Delta_C$ by
\begin{align}
 N_H(x)&=x\log x+(2\gamma-1)x+\Delta_H(x),
      \label{eq:def-hyperbola}\\
 N_C(x)&=\pi x+\Delta_C(x).
      \label{eq:def-circle}
\end{align}
These are the standard normalizations for the two problems
\cite{Dirichlet,Guy,BerndtKimZaharescu}. The Dirichlet divisor problem asks
how small the exponent $\theta_H$ can be in the bound
\[
 \Delta_H(x)=O(x^{\theta_H+\eps})
 \qquad(\eps>0),
\]
and the Gauss circle problem asks the same question for the exponent
$\theta_C$ in
\[
 \Delta_C(x)=O(x^{\theta_C+\eps})
 \qquad(\eps>0).
\]
Dirichlet proved $\Delta_H(x)=O(x^{1/2})$ \cite{Dirichlet}, and Vorono\"i
proved $\Delta_H(x)=O(x^{1/3}\log x)$ \cite{Voronoi}. Gauss's elementary
geometric argument gives $\Delta_C(x)=O(x^{1/2})$
\cite{Guy,BerndtKimZaharescu}. Hardy's omega results show that neither error
term is $o(x^{1/4})$ \cite{HardyCircle,HardyDivisor}. The conjectured optimal
bounds are
\[
 \Delta_H(x)=O(x^{1/4+\eps}),\qquad
 \Delta_C(x)=O(x^{1/4+\eps})
 \qquad(\eps>0).
\]
Equivalently, the conjectural optimal exponents are
$\theta_H=\theta_C=1/4$. In this literature, the functions $\Delta_H$ and
$\Delta_C$ are usually denoted by $\Delta$ and $P$, respectively.

The two problems have long been studied in parallel. Ingham treated them
together \cite{Ingham}, and Iwaniec and Mozzochi bounded both error terms
by a common method \cite{IwaniecMozzochi}. The analogy also appears in the
corresponding Vorono\"i formulas for the two error terms \cite{Ingham,Ivic}.
Huxley proved that the exponent $131/416$ is admissible in both problems
\cite{Huxley}, and a 2023 preprint of Li and Yang claims the smaller common
admissible exponent
$0.3144831759741\ldots$ \cite{LiYang}. For broader accounts, see Ivi\'c
\cite{Ivic}, Huxley \cite{HuxleyBook,HuxleyMillennium}, Guy \cite{Guy}, and
Berndt, Kim, and Zaharescu \cite{BerndtKimZaharescu}. 

The following theorem is the main result of this paper. It places both
classical error terms inside the single Fibonacci sum
\eqref{eq:def-F-sum}, with the parity of $n$ determining which one appears.

\begin{theorem}\label{thm:main}
For every odd integer $n\geq 3$ and every $\eps>0$, one has
\begin{equation}\label{eq:odd-main}
 \FF(n)=\frac{\pi}{8}n
 +\frac14\left(\Delta_C(n)-\Delta_C\left(\frac n2\right)\right)
 +O(n^\eps).
\end{equation}
For every even integer $n\geq 4$ and every $\eps>0$, one has
\begin{equation}\label{eq:even-main}
 \FF(n)=\frac{3\log 2}{4}n
 +2\Delta_H\left(\frac n2\right)
 -5\Delta_H\left(\frac n4\right)
 +2\Delta_H\left(\frac n8\right)
 +O(n^\eps).
\end{equation}
\end{theorem}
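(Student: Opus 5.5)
The plan is to show that each term $\{F_n/F_k\}$ is, up to an exponentially small error, either $0$ or $1$. Then $\FF(n)$ reduces to counting the $k\le n$ that satisfy congruence conditions on $q=\lfloor n/k\rfloor$, on $r=n-qk$ and on $k$. This count is a signed floor sum, which we then identify with the lattice-point counts $N_H$ and $N_C$.

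\textbf{Step 1: residues of $F_n$ modulo $F_k$.} From $F_{m+k}=F_{k+1}F_m+F_kF_{m-1}$ we get $F_{m+k}\equiv F_{k+1}F_m \pmod{F_k}$. Cassini's identity, together with $F_{k-1}\equiv F_{k+1}\pmod{F_k}$, gives $F_{k+1}^2\equiv(-1)^k\pmod{F_k}$. Writing $n=qk+r$ with $0\le r<k$, we obtain
\[
F_n\equiv F_{k+1}^{q}F_r \pmod{F_k}.
\]
If $q=2s$ is even, this is $(-1)^{ks}F_r$. If $q$ is odd, d'Ocagne's identity $F_kF_{r+1}-F_{k+1}F_r=(-1)^rF_{k-r}$ gives
\[
F_n\equiv(-1)^{k(q-1)/2}(-1)^{r+1}F_{k-r}\pmod{F_k}.
\]
So $F_n\equiv \pm F_\rho \pmod{F_k}$ with $\rho\in\{r,k-r\}$ and an explicit sign $\sigma(n,k)$. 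Consequently $\{F_n/F_k\}$ equals $F_\rho/F_k$ when $\sigma=+1$ and $1-F_\rho/F_k$ when $\sigma=-1$, apart from the trivial cases where $F_\rho\equiv 0$.

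\textbf{Step 2: error control.} We have $F_\rho/F_k\ll \varphi^{-(k-\rho)}$, where $k-\rho\in\{k-r,\,r\}$. Summing over $k$, the $k$ with $n\bmod k=r$ (respectively $k-r$) equal to a fixed small value $j$ divide $n-j$ (respectively $n+j$). Hence there are at most $\tau(n\mp j)\ll n^{\eps}$ of them, and $\sum_j n^\eps\varphi^{-j}\ll n^\eps$. The degenerate cases $r=0$ and small $k$ are also absorbed in this bound. This yields
\[
\FF(n)=\#\{k\le n:\sigma(n,k)=-1\}+O(n^\eps).
\]

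\textbf{Step 3: converting the sign condition into floor sums.} Since $r=n-qk$, the parity of $r$ is determined by the parities of $n$, $q$ and $k$. The condition $\sigma=-1$ therefore becomes a condition on $q \bmod 4$ and $k\bmod 2$, with the pattern depending on the parity of $n$.
\begin{itemize}
\item For $n$ odd, the signs $(-1)^{ks}$ and $(-1)^{k(q-1)/2}$ combine with the parity of $r$ so that the indicator becomes a $\chi_4$-type pattern in $q$. We write $\#\{k: \lfloor n/k\rfloor\equiv a \bmod 4\}$ as $\sum_m\bigl(\lfloor n/m\rfloor-\lfloor n/(m+1)\rfloor\bigr)$ over $m\equiv a$, and split by the parity of $k$ using $\lfloor n/(2k')\rfloor$. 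We then expect to reach $\tfrac14\bigl(N_C(n)-N_C(n/2)\bigr)$ through Jacobi's identity quoted above, and hence \eqref{eq:odd-main}, since $\tfrac14(\pi n-\pi n/2)=\pi n/8$.
\item For $n$ even, no character appears. The count reduces to combinations of $\sum_{k\le x}\lfloor x/k\rfloor$ at $x=n/2,n/4,n/8$ together with alternating sums $\sum_k(-1)^k\lfloor x/k\rfloor$, which can be rewritten via $N_H(x)-2N_H(x/2)$-type identities. Inserting \eqref{eq:def-hyperbola} and checking that the $x\log x$ and $\gamma$ terms cancel should leave $\tfrac{3\log2}{4}n$ together with the stated combination $2\Delta_H(n/2)-5\Delta_H(n/4)+2\Delta_H(n/8)$.
\end{itemize}

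\textbf{Main obstacle.} I expect the hardest part to be the bookkeeping in Step 3: tracking the sign $\sigma(n,k)$ across the cases for $q\bmod4$, $k\bmod2$ and $n\bmod2$, and recognising the resulting floor sums exactly as the Jacobi and hyperbola sums with the right dilations. I would first tabulate $\sigma$ in all cases and verify the result numerically for small $n$. Steps 1 and 2 are routine once the residue formula is in place.
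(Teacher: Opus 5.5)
Your route is the paper's. Step 1 reproduces Lemma~\ref{lem:fib-congruence}: your sign $(-1)^{k(q-1)/2+r+1}$ is the paper's $(-1)^{sk+r-1}$. Step 2 is the error analysis of Lemma~\ref{lem:profiles}, and your telescoping $\sum_m(\lfloor n/m\rfloor-\lfloor n/(m+1)\rfloor)$ is Lemma~\ref{lem:quotient}. Steps 1 and 2 are correct.

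The gap is Step 3, which carries the whole content of the theorem. You write that you ``expect to reach'' the circle expression and that the even count ``should leave'' the combination $2,-5,2$, so both formulas are assumed rather than derived. The tabulation is short. Using $r\equiv n-qk\pmod 2$:
\begin{itemize}
\item for odd $n$, $\sigma=-1$ exactly when $k$ is odd and $q\equiv 1,2\pmod 4$;
\item for even $n$, $\sigma=-1$ exactly when $k$ is even and $q$ is odd, or when $k$ is odd and $q\equiv 2,3\pmod 4$.
\end{itemize}
These are the paper's profiles $w$, $u$, $v$.

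\textbf{Odd $n$.} Your telescoping turns $\#\{k\le x:\lfloor x/k\rfloor\equiv 1,2\pmod 4\}$ into $\sum_q\chi_4(q)\lfloor x/q\rfloor=(N_C(x)-1)/4$. The even $k=2j$ give the same count at $x=n/2$. Subtracting gives $\frac14(N_C(n)-N_C(n/2))$.

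\textbf{Even $n$.} What you did not identify is why only $n/2$, $n/4$, $n/8$ occur. Put
\[
U(x)=\#\{k\le x:\lfloor x/k\rfloor\text{ odd}\}=\sum_q(-1)^{q+1}\lfloor x/q\rfloor=N_H(x)-2N_H(x/2).
\]
The even $k$ with $q$ odd contribute $U(n/2)$. Since $\lfloor\lfloor x/k\rfloor/2\rfloor=\lfloor (x/2)/k\rfloor$, the condition $\lfloor x/k\rfloor\equiv 2,3\pmod 4$ says exactly that $\lfloor (x/2)/k\rfloor$ is odd. So the $k\le n$ with $q\equiv 2,3$ number $U(n/2)$, the even ones number $U(n/4)$, and the odd ones number $U(n/2)-U(n/4)$. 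Hence
\[
\FF(n)=2U(n/2)-U(n/4)+O(n^\eps)=2N_H(n/2)-5N_H(n/4)+2N_H(n/8)+O(n^\eps).
\]
Inserting \eqref{eq:def-hyperbola} cancels the $n\log n$ and $\gamma$ terms and leaves $\frac{3\log 2}{4}n$ with the stated remainder.

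With these computations supplied, your argument is complete and coincides with the paper's.
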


A question about the even-index limit had previously appeared on
MathOverflow. In response, GH from MO gave a short proof of the two
parity-dependent leading terms with an error $O(\sqrt n)$ \cite{GHMO}.
Theorem~\ref{thm:main} refines that result by expressing both remainders in
terms of the two classical error functions, up to $O(n^\eps)$.

The relation between the sum and the two classical problems is most simply
stated in terms of error exponents. Let $0\leq\theta_C,\theta_H<1$.
Theorem~\ref{thm:main} and proposition~\ref{prop:converse} give
\begin{equation}\label{eq:exponent-equivalence}
 \left.
 \begin{aligned}
  \Delta_C(x)&=O(x^{\theta_C+\eps}),\\
  \Delta_H(x)&=O(x^{\theta_H+\eps})
 \end{aligned}
 \right\}
 \quad\Longleftrightarrow\quad
 \FF(n)=
 \begin{cases}
  \dfrac{\pi}{8}n+O(n^{\theta_C+\eps}),&n\text{ odd},\\[6pt]
  \dfrac{3\log 2}{4}n+O(n^{\theta_H+\eps}),&n\text{ even}
 \end{cases}
 \qquad(\eps>0).
\end{equation}
The equivalence is unconditional. In particular, the conjecture that
$1/4$ is the optimal exponent in both classical problems is equivalent
to the conjecture that $1/4$ is the optimal remainder exponent for
$\FF(n)$, separately along the odd and even indices.

The proof combines Fibonacci congruences with exponentially weighted
divisor sums. Section~2 establishes the quotient identity used throughout.
Sections~3, 4, and~5 derive the periodic profiles and prove
theorem~\ref{thm:main}. Jacobi's two-square identity handles the odd indices,
while an alternating divisor identity handles the even indices. Section~6
proves the converse implications in \eqref{eq:exponent-equivalence}.
Section~7 treats generalized Fibonacci and Lucas sequences and records a
numerical observation for the Tribonacci sequence.

\section{A quotient identity and an exponential example}

We first record an identity for functions of $\lfloor x/k\rfloor$ and apply it
to an elementary exponential sequence. This example already produces the
Dirichlet divisor error term.

\begin{lemma}\label{lem:quotient}
Let $\omega$ be a complex-valued function on the nonnegative integers with
$\omega(0)=0$. For every real number $x\geq 1$, one has
\begin{equation}\label{eq:quotient-identity}
 \sum_{k\leq x}\omega\left(\left\lfloor\frac{x}{k}\right\rfloor\right)
 =\sum_{q\leq x}\bigl(\omega(q)-\omega(q-1)\bigr)
  \left\lfloor\frac{x}{q}\right\rfloor
 =\sum_{j\leq x}\sum_{q\mid j}\bigl(\omega(q)-\omega(q-1)\bigr).
\end{equation}
\end{lemma}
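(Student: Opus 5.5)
The plan is to prove the first equality by telescoping, then obtain the second by swapping the order of summation. For each integer $k$ with $1\le k\le x$, write $m=\lfloor x/k\rfloor\ge1$. Since $\omega(0)=0$, we have the telescoping identity
\[
 \omega(m)=\sum_{q=1}^{m}\bigl(\omega(q)-\omega(q-1)\bigr).
\]
Substituting this into the left side of \eqref{eq:quotient-identity} gives a double sum over pairs $(k,q)$ with $1\le k\le x$ and $1\le q\le\lfloor x/k\rfloor$.

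The key observation is that for positive integers $k,q$ one has $q\le\lfloor x/k\rfloor$ if and only if $kq\le x$, because $q$ is an integer. So the double sum runs over the region $\{(k,q)\in\mathbb N^2: kq\le x\}$, which is symmetric in $k$ and $q$. Interchanging the order of summation, for fixed $q\le x$ the number of admissible $k$ is $\lfloor x/q\rfloor$. This yields the middle expression $\sum_{q\le x}(\omega(q)-\omega(q-1))\lfloor x/q\rfloor$.

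For the last expression, I will regroup the same finite sum over pairs $(k,q)$ with $kq\le x$ according to the product $j=kq$. Here $j$ ranges over positive integers $j\le x$, and for each such $j$ the pairs with $kq=j$ correspond bijectively to the divisors $q\mid j$, with $k=j/q$. This gives $\sum_{j\le x}\sum_{q\mid j}(\omega(q)-\omega(q-1))$.

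All sums are finite, so the rearrangements need no convergence argument. The only point requiring care is the equivalence $q\le\lfloor x/k\rfloor\iff kq\le x$ for integer $q$, together with the hypothesis $\omega(0)=0$, which makes the telescoping start cleanly at $q=1$. I do not expect a real obstacle here.
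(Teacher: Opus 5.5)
Your proposal is correct and follows the paper's own proof: you telescope $\omega(\lfloor x/k\rfloor)$ using $\omega(0)=0$, interchange the finite double sum over pairs with $kq\le x$ to get the middle expression, and regroup by $j=kq$ to get the last one. Your explicit check that $q\le\lfloor x/k\rfloor$ holds exactly when $kq\le x$ for integer $q$ is the detail the paper leaves implicit in the interchange.
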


\begin{proof}
For every nonnegative integer $t$, telescoping gives
\[
 \omega(t)=\sum_{q\leq t}\bigl(\omega(q)-\omega(q-1)\bigr).
\]
Substitution on the left of \eqref{eq:quotient-identity}, followed by an
interchange of the two finite sums, gives the first equality. Grouping the
terms according to $j=kq$ gives the second.
\end{proof}

For integers $m\geq 2$ and $n\geq 1$, define
\begin{equation}\label{eq:def-exp-sum}
 \EE_m(n)=\sum_{k\leq n}
 \left\{\frac{m^n+1}{m^k+1}\right\}.
\end{equation}
\begin{proposition}\label{prop:exponential}
Let $m\geq 2$ and $n\geq 1$ be integers. Write
$n=2^\nu n_{\mathrm{odd}}$, where $\nu$ is a nonnegative integer and
$n_{\mathrm{odd}}$ is odd. For every $\eps>0$, one has
\begin{equation}\label{eq:exp-remainder}
 \EE_m(n)=n\log 2+\Delta_H(n)-2\Delta_H\left(\frac n2\right)
 -\tau(n_{\mathrm{odd}})+O(n^\eps).
\end{equation}
\end{proposition}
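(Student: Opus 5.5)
The plan is to compute each summand $\{(m^n+1)/(m^k+1)\}$ exactly from a congruence modulo $m^k+1$. This replaces $\EE_m(n)$ by a count of $k$ determined by the parity of $\lfloor n/k\rfloor$, plus an exponentially small correction. The count is then evaluated with Lemma~\ref{lem:quotient}.

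\emph{Step 1 (exact residues).} For $1\le k\le n$ write $n=qk+r$ with $q=\lfloor n/k\rfloor$ and $0\le r<k$. Since $m^k\equiv -1 \pmod{m^k+1}$, we get $m^n+1\equiv (-1)^q m^r+1$.
\begin{itemize}
\item If $q$ is even, the least nonnegative residue is $m^r+1<m^k+1$. The fractional part is therefore $(m^r+1)/(m^k+1)$.
\item If $q$ is odd and $r=0$, the residue is $0$, so the fractional part is $0$.
\item If $q$ is odd and $r\ge1$, the residue is $m^k+2-m^r$. The fractional part is then $1-(m^r-1)/(m^k+1)$.
\end{itemize}
In all cases
\[
 \left\{\frac{m^n+1}{m^k+1}\right\}=\mathbf 1[q\text{ odd},\,k\nmid n]+O\bigl(m^{r-k}\bigr),
\]
with an absolute implied constant, because $(m^r\pm1)/(m^k+1)\le 2m^{r-k}$.

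\emph{Step 2 (the correction is $O(n^\eps)$).} We must bound $\sum_{k\le n} m^{-(k-r_k)}$, where $r_k$ is the remainder of $n$ modulo $k$. Group the $k$ by $j=k-r_k\ge1$. Then $n\equiv -j\pmod k$, i.e.\ $k\mid n+j$. Hence
\[
 \sum_{k\le n} m^{-(k-r_k)}\le\sum_{j\ge1}m^{-j}\,\tau(n+j)\ll_\eps \sum_{j\ge1}2^{-j}(n+j)^{\eps}\ll n^\eps .
\]
I expect this step to be the only real obstacle. It is where the near-coincidences $r\approx k$ must be controlled, and the divisor bound $\tau(N)\ll N^\eps$ does the work. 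Everything else is exact.

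\emph{Step 3 (counting).} It remains to evaluate
\[
 \#\{k\le n:\lfloor n/k\rfloor \text{ odd}\}-\#\{k\mid n:\ n/k\text{ odd}\}.
\]
The second count equals the number of odd divisors of $n$, namely $\tau(n_{\mathrm{odd}})$. For the first, apply Lemma~\ref{lem:quotient} with $\omega(t)=\mathbf 1[t\text{ odd}]$, which satisfies $\omega(0)=0$ and $\omega(q)-\omega(q-1)=(-1)^{q+1}$. This gives
\[
 \sum_{q\le n}(-1)^{q+1}\left\lfloor\frac nq\right\rfloor
 =\sum_{q\le n}\left\lfloor\frac nq\right\rfloor-2\sum_{q\le n/2}\left\lfloor\frac{n/2}{q}\right\rfloor
 =N_H(n)-2N_H(n/2).
\]
By \eqref{eq:def-hyperbola}, the main terms combine as
\[
 n\log n+(2\gamma-1)n-\bigl(n\log(n/2)+(2\gamma-1)n\bigr)=n\log 2 .
\]
This leaves exactly $\Delta_H(n)-2\Delta_H(n/2)$. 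For $n=1$ the term $N_H(n/2)$ is zero rather than given by \eqref{eq:def-hyperbola}, and the $n\le 2$ cases are absorbed into $O(1)$. Combining Steps 1–3 yields \eqref{eq:exp-remainder}.
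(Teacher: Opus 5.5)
Your proposal is correct and follows essentially the same route as the paper: you compute the exact residues of $m^n+1$ modulo $m^k+1$, control the near-integer errors through $k\mid n+h$ and the divisor bound, apply Lemma~\ref{lem:quotient} to the indicator of odd integers to get $N_H(n)-2N_H(n/2)$, and remove the $\tau(n_{\mathrm{odd}})$ exact-divisor terms. Your single error bound $2m^{r-k}$ is slightly tidier than the paper's $m^{-(k-r)}+m^{-k}$, since the second term there is already dominated by the first.
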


\begin{proof}
For each $k\leq n$, write $n=qk+r$ with $0\leq r<k$. Reduction modulo
$m^k+1$ gives
\[
 m^n+1\equiv
 \begin{cases}
  m^r+1& q\text{ even},\\
  1-m^r& q\text{ odd}
 \end{cases}
 \pmod {m^k+1}.
\]
When $q$ is odd and $r>0$, the fractional part in
\eqref{eq:def-exp-sum} equals
\[
 1-\frac{m^r-1}{m^k+1}.
\]
When $q$ is even, it equals $(m^r+1)/(m^k+1)$. The terms with odd $q$
and $r=0$ vanish.

The fractional part therefore differs from $1$ when $q$ is odd and $r>0$,
and from $0$ otherwise, by at most a constant times
$m^{-(k-r)}+m^{-k}$. If $h=k-r$, then $k$ divides $n+h$. The standard
divisor estimate $\tau(j)=O(j^\eps)$ gives
\[
 \sum_{k\leq n}m^{-(k-r)}
 \ll \sum_{h\geq 1}m^{-h}\tau(n+h)
 \ll n^\eps.
\]
Here the last bound is uniform in $m$, since $m\geq 2$. The terms $m^{-k}$
also have a uniformly bounded sum.

Apply lemma~\ref{lem:quotient} to the function that equals $1$ on odd
positive integers and $0$ otherwise. Its first difference is $(-1)^{q+1}$,
and the lemma gives
\[
 \#\left\{k\leq n: \left\lfloor\frac nk\right\rfloor
 \text{ is odd}\right\}
 =N_H(n)-2N_H\left(\frac n2\right).
\]
The excluded pairs with $r=0$ correspond to divisors $k$ of $n$ for
which $n/k$ is odd. Their number is $\tau(n_{\mathrm{odd}})$. Substitute
\eqref{eq:def-hyperbola} into the last identity to obtain
\eqref{eq:exp-remainder}.
\end{proof}

Proposition~\ref{prop:exponential} produces the divisor error term from a
sequence with a simple residue law. The Fibonacci sequence adds a parity
interaction that also produces the circle error term.

\section{Fibonacci residues}

For positive integers $n$ and $k$ with $k\leq n$, let $q$ and $r$ be the
quotient and the remainder in the division of $n$ by $k$. Thus
\[
 n=qk+r,\qquad 0\leq r<k.
\]

\begin{lemma}\label{lem:fib-congruence}
Let $k\geq 3$ and $s\geq 0$ be integers. For every integer $r$ with
$0\leq r<k$, one has
\begin{align}
 F_{2sk+r}&\equiv (-1)^{sk}F_r\pmod {F_k},\label{eq:even-q-cong}\\
 F_{(2s+1)k+r}&\equiv (-1)^{sk+r-1}F_{k-r}\pmod {F_k}
 \quad (r>0).\label{eq:odd-q-cong}
\end{align}
If $n=qk$ for a positive integer $q$, then $F_k$ divides $F_n$.
\end{lemma}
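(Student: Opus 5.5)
The plan is to derive everything from the addition formula $F_{a+b}=F_{a+1}F_b+F_aF_{b-1}$ together with Cassini-type reductions modulo $F_k$. Taking $a=k$ gives $F_{k+b}=F_{k+1}F_b+F_kF_{b-1}\equiv F_{k+1}F_b\pmod{F_k}$ for every $b\geq 1$. Iterating, $F_{qk+r}\equiv F_{k+1}^{\,q}F_r\pmod{F_k}$ for all $q\geq0$ and $0\le r<k$; when $r=0$ we read $F_0=0$. The $r=0$ case immediately gives $F_k\mid F_{qk}$, which is the last assertion.

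Next I need the square of $F_{k+1}$ modulo $F_k$. Cassini's identity $F_{k+1}F_{k-1}-F_k^2=(-1)^k$, together with $F_{k-1}=F_{k+1}-F_k\equiv F_{k+1}$, gives $F_{k+1}^2\equiv(-1)^k\pmod{F_k}$. Hence $F_{k+1}^{2s}\equiv(-1)^{sk}$, and \eqref{eq:even-q-cong} follows at once.

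For \eqref{eq:odd-q-cong}, the same argument reduces the claim to $F_{k+1}F_r\equiv(-1)^{r-1}F_{k-r}\pmod{F_k}$ for $0<r<k$. I would use the negative-index formula, or equivalently the addition formula with $a=k-r$ and $b=r$: $F_k=F_{k-r+1}F_r+F_{k-r}F_{r-1}$. It is cleaner to apply d'Ocagne's identity $F_mF_{n+1}-F_{m+1}F_n=(-1)^nF_{m-n}$ with $m=k$ and $n=r$. This gives $F_kF_{r+1}-F_{k+1}F_r=(-1)^rF_{k-r}$, so $F_{k+1}F_r\equiv(-1)^{r+1}F_{k-r}=(-1)^{r-1}F_{k-r}$. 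Multiplying by $F_{k+1}^{2s}\equiv(-1)^{sk}$ yields \eqref{eq:odd-q-cong}.

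The only delicate step is getting the signs right in d'Ocagne's identity. I would check it on a small case, for instance $k=4$, $r=1$: $F_5F_1=5\equiv 2$ and $F_3=2$ modulo $F_4=3$, with sign $(-1)^0=+1$. This agrees. The hypothesis $k\geq3$ is not needed for the congruences themselves; it only ensures $F_k>1$, so that the residues are meaningful.
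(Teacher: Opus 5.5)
Your proof is correct and follows essentially the paper's route: reduce $F_{qk+r}$ modulo $F_k$ to the $q$th power of a unit times $F_r$, compute the square of that unit with Cassini's identity, and handle odd $q$ with a d'Ocagne-type determinant identity. The differences are cosmetic. You use $F_{k+1}$, which is congruent to the paper's $F_{k-1}$ modulo $F_k$, and you iterate the addition formula instead of reducing $Q^k$ to a scalar matrix; as a side benefit, $F_k\mid F_{qk}$ falls out as the case $r=0$ rather than being cited as a standard fact (the iteration step with $b=0$, not covered by your stated range $b\geq 1$, is trivial because $F_0=0$).
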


\begin{proof}
The first congruence is immediate when $s=0$. For every positive integer
$q$, the Fibonacci addition formula gives
\[
 F_{qk+r}=F_{qk-1}F_r+F_{qk}F_{r+1}.
\]
Consequently,
\begin{equation}\label{eq:basic-residue}
 F_{qk+r}\equiv F_{qk-1}F_r\pmod {F_k}.
\end{equation}
Set
\[
 Q=\begin{pmatrix}1&1\\1&0\end{pmatrix},\qquad
 Q^k=\begin{pmatrix}F_{k+1}&F_k\\F_k&F_{k-1}\end{pmatrix}.
\]
Reduction modulo $F_k$ gives
\[
 Q^k\equiv F_{k-1}\begin{pmatrix}1&0\\0&1\end{pmatrix}\pmod{F_k}.
\]
Taking the lower right entry of $Q^{qk}$ gives
\[
 F_{qk-1}\equiv F_{k-1}^{\,q}\pmod {F_k}.
\]
Cassini's identity gives
\[
 F_{k-1}^2\equiv (-1)^k\pmod {F_k}.
\]
These two congruences and \eqref{eq:basic-residue} prove
\eqref{eq:even-q-cong}. For odd $q$, use
\[
 F_{k-1}F_r\equiv (-1)^{r-1}F_{k-r}\pmod {F_k},
\]
which follows from the determinant identity
$F_{k-1}F_r-F_kF_{r-1}=(-1)^{r-1}F_{k-r}$. This proves
\eqref{eq:odd-q-cong}. The divisibility assertion is the standard relation
$F_a\mid F_b$ when $a\mid b$.
\end{proof}

The signs in lemma~\ref{lem:fib-congruence} determine whether a fractional
part is close to zero or to one. Define the periodic functions $w$, $u$, and
$v$ by the following table.
\[
\begin{array}{c|cccc}
q\bmod4&0&1&2&3\\ \hline
w(q)&0&1&1&0\\
u(q)&0&1&0&1\\
v(q)&0&0&1&1
\end{array}
\]

\begin{lemma}\label{lem:profiles}
For every odd integer $n\geq 3$ and every $\eps>0$, one has
\begin{equation}\label{eq:odd-profile}
 \FF(n)=\sum_{\substack{k\leq n\\ k\text{ odd}}}
 w\left(\left\lfloor\frac nk\right\rfloor\right)+O(n^\eps).
\end{equation}
For every even integer $n\geq 4$ and every $\eps>0$, one has
\begin{equation}\label{eq:even-profile}
 \FF(n)=\sum_{\substack{k\leq n\\ k\text{ even}}}
 u\left(\left\lfloor\frac nk\right\rfloor\right)
 +\sum_{\substack{k\leq n\\ k\text{ odd}}}
 v\left(\left\lfloor\frac nk\right\rfloor\right)
 +O(n^\eps).
\end{equation}
\end{lemma}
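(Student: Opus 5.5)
The plan is to evaluate each summand of $\FF(n)$ separately, using the division $n=qk+r$ and lemma~\ref{lem:fib-congruence}, and to show that each summand equals a $0$--$1$ indicator depending only on $q\bmod 4$, the parity of $k$, and the parity of $n$, up to an error. The errors will be summed as in the proof of proposition~\ref{prop:exponential}.

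\emph{Exceptional terms.} The terms $k=1,2$ vanish because $F_1=F_2=1$. The terms with $r=0$ vanish by the divisibility statement of lemma~\ref{lem:fib-congruence}. There are at most $\tau(n)=O(n^\eps)$ such $k$, so replacing them by the claimed indicator costs $O(n^\eps)$.

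\emph{Generic terms.} For $k\geq 3$ and $r>0$, the lemma gives $F_n\equiv \pm F_r$ when $q=2s$, and $F_n\equiv\pm F_{k-r}$ when $q=2s+1$, modulo $F_k$.
\begin{itemize}
\item A positive residue $F_r$ or $F_{k-r}$ gives the fractional part $F_r/F_k\ll\phi^{-(k-r)}$ or $F_{k-r}/F_k\ll \phi^{-r}$, where $\phi$ is the golden ratio.
\item A negative residue gives $1$ minus such a quantity, since $0<F_r,F_{k-r}<F_k$.
\end{itemize}
Hence
\[
\Bigl\{\tfrac{F_n}{F_k}\Bigr\}=\mathbf 1[\text{sign}=-]+O\bigl(\phi^{-(k-r)}+\phi^{-r}\bigr).
\]

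\emph{Summing the errors.} For the first error, put $h=k-r$. Then $k\mid n+h$, and the sum is $\ll\sum_{h\ge1}\phi^{-h}\tau(n+h)\ll n^\eps$, exactly as in proposition~\ref{prop:exponential}. For the second error, $k\mid n-r$, and $\sum_{r\ge1}\phi^{-r}\tau(n-r)\ll n^\eps$ (only $r<n$ occurs). This is the only analytic input, and it is routine.

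\emph{Sign bookkeeping.} This is the main step, though it is elementary. The key fact is that for odd $q$ one has $r=n-qk\equiv n-k\pmod 2$.
\begin{itemize}
\item For $q=2s$, the sign is $(-1)^{sk}$. It is negative exactly when $s$ and $k$ are both odd, that is, when $q\equiv2\pmod4$ and $k$ is odd.
\item For $q=2s+1$, the sign is $(-1)^{sk+r-1}$. It is negative exactly when $sk+n-k$ is even.
\end{itemize}

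For odd $n$:
\begin{itemize}
\item If $k$ is odd, the odd-$q$ condition reads $s$ even, that is, $q\equiv1\pmod4$. Combined with $q\equiv2\pmod 4$ from the even case, this is $w(q)$.
\item If $k$ is even, the odd-$q$ condition becomes $n$ even, which is false, and the even-$q$ case needs $k$ odd. So even $k$ contribute nothing.
\end{itemize}
This gives \eqref{eq:odd-profile}.

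For even $n$:
\begin{itemize}
\item If $k$ is odd, the odd-$q$ condition is $s+1$ even, that is, $q\equiv3\pmod4$. Together with $q\equiv 2\pmod4$, this is $v(q)$.
\item If $k$ is even, every odd $q$ qualifies and no even $q$ does, which is $u(q)$.
\end{itemize}
This gives \eqref{eq:even-profile}.

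Finally, I would check that the $O(1)$ adjustments for $k\le2$ and $r=0$ are absorbed into $O(n^\eps)$, which completes the proof.
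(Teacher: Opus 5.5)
Your proposal is correct and is essentially the paper's proof. You reduce each summand via lemma~\ref{lem:fib-congruence} to a sign indicator depending on $q \bmod 4$ and the parities of $k$ and $n$, absorb the $k\le 2$ and $r=0$ terms into $O(\tau(n))$, and bound the deviations by $\sum_{h\ge1}\varphi^{-h}\bigl(\tau(n+h)+\tau(n-h)\bigr)\ll n^\eps$; your explicit remark that $r\equiv n-k\pmod 2$ for odd $q$ is precisely the step the paper leaves implicit when it converts the parity of $tk+r-1$ into conditions on $q\bmod 4$ and $k$.
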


\begin{proof}
Terms with $k\leq 2$ contribute $O(1)$, so assume $k\geq 3$ and use
the division $n=qk+r$. If $q=2t$ and $r>0$, formula
\eqref{eq:even-q-cong} shows that the fractional part is exactly
$F_r/F_k$ or $1-F_r/F_k$, according as $tk$ is even or odd. It vanishes
when $r=0$. Thus a contribution close to one requires $k$ odd and
$q\equiv 2\pmod 4$.

If $q=2t+1$ and $r>0$, formula
\eqref{eq:odd-q-cong} shows that the fractional part is exactly
$F_{k-r}/F_k$ or $1-F_{k-r}/F_k$, according as $tk+r-1$ is even or odd.
If $n$ is odd, the second case reduces to $k$ odd and
$q\equiv 1\pmod 4$. If $n$ is even, it reduces to either $k$ even and $q$
odd, or $k$ odd and $q\equiv 3\pmod 4$. Combining the odd and even values
of $q$ gives the two formulas in the lemma.

Put $\varphi=(1+\sqrt5)/2$. Then
\[
 \frac{F_r}{F_k}\ll\varphi^{-(k-r)},\qquad
 \frac{F_{k-r}}{F_k}\ll\varphi^{-r}.
\]
For the first ratio, put $h=k-r$. The equality $n=qk+r$ implies
$k\mid n+h$. For the second ratio, put $h=r$, which gives $k\mid n-h$.
Hence the total error is
\[
 \ll\sum_{h\geq 1}\varphi^{-h}
 \bigl(\tau(n+h)+\tau(n-h)\bigr)+O(\tau(n)),
\]
where terms with nonpositive arguments are omitted. Split the sum at $h=n$.
For $h\leq n$, the divisor estimate $\tau(j)=O(j^\eps)$ applies, while for
$h>n$ the exponential weight absorbs the polynomial growth of
$\tau(n+h)$. This proves the result. The final term $O(\tau(n))$ accounts
for the pairs with $r=0$, which the periodic functions may count even though
the corresponding fractional parts vanish.
\end{proof}

\section{Odd indices and the circle problem}

For every positive integer $q$, define $\chi_4(q)$ by
\[
 \chi_4(q)=
 \begin{cases}
  0&q\text{ even},\\
  1&q\equiv 1\pmod 4,\\
  -1&q\equiv 3\pmod 4.
 \end{cases}
\]
This is the nonprincipal character modulo $4$. Since
\[
 w(q)-w(q-1)=\chi_4(q),
\]
lemma~\ref{lem:quotient} gives
\[
 \sum_{k\leq x}w\left(\left\lfloor\frac{x}{k}\right\rfloor\right)
 =\sum_{q\leq x}\chi_4(q)\left\lfloor\frac{x}{q}\right\rfloor.
\]

For every positive integer $j$, let $r_2(j)$ denote the number of ordered
pairs $(p_1,p_2)\in\mathbb Z^2$ satisfying $j=p_1^2+p_2^2$. Jacobi's two-square
identity states that
\[
 r_2(j)=4\sum_{e\mid j}\chi_4(e).
\]
Summing over $j\leq x$ and using the preceding identity gives
\begin{equation}\label{eq:w-circle}
 \sum_{k\leq x}w\left(\left\lfloor\frac{x}{k}\right\rfloor\right)
 =\frac{N_C(x)-1}{4}.
\end{equation}
In \eqref{eq:w-circle} with $x=n$, the even values $k=2j$ contribute
$\sum_{j\leq n/2}w(\lfloor n/(2j)\rfloor)$, and $n/(2j)=(n/2)/j$, so this
contribution equals $(N_C(n/2)-1)/4$ by the same identity at $x=n/2$.
Subtracting it and using \eqref{eq:odd-profile} gives
\[
 \FF(n)=\frac14\left(N_C(n)-N_C\left(\frac n2\right)\right)
 +O(n^\eps)
\]
for odd $n$. Formula \eqref{eq:odd-main} then follows from
\eqref{eq:def-circle}.

The two-dimensional count does not enter through an estimate. It follows from
Jacobi's arithmetic identity. The parity of the Fibonacci residue selects
$\chi_4$, the function whose divisor sum appears in the two-square formula.

\section{Even indices and the divisor problem}

For every real $x\geq 1$, define
\[
 U(x)=\sum_{k\leq x}u\left(\left\lfloor\frac{x}{k}\right\rfloor\right).
\]
The first difference of $u$ is $(-1)^{q+1}$. Lemma~\ref{lem:quotient}
therefore gives
\begin{equation}\label{eq:U}
 U(x)=N_H(x)-2N_H\left(\frac x2\right).
\end{equation}
The first difference of $v$ vanishes on odd integers and equals
$(-1)^{j+1}$ at $q=2j$ for every positive integer $j$. Thus
\[
 \sum_{k\leq x}v\left(\left\lfloor\frac{x}{k}\right\rfloor\right)
 =U\left(\frac x2\right).
\]

In \eqref{eq:even-profile}, the sum over the even values $k=2j$ is
$U(n/2)$, since $n/(2j)=(n/2)/j$. The sum over odd $k$ involves $v$. The
$v$-sum over all $k\leq n$ equals $U(n/2)$, and its even values $k=2j$
contribute $U(n/4)$, so the odd values contribute $U(n/2)-U(n/4)$.
Equation \eqref{eq:U} yields
\begin{align*}
 \FF(n)
 &=2U\left(\frac n2\right)-U\left(\frac n4\right)+O(n^\eps)
 \\
 &=2N_H\left(\frac n2\right)-5N_H\left(\frac n4\right)
 +2N_H\left(\frac n8\right)+O(n^\eps).
\end{align*}
Substitution of \eqref{eq:def-hyperbola} cancels the $n\log n$ and
$(2\gamma-1)n$ terms. The remaining linear
term is $3n\log 2/4$. This proves \eqref{eq:even-main} and completes the
proof of theorem~\ref{thm:main}.

\section{Equivalence of error exponents}

Theorem~\ref{thm:main} gives the forward implications in
\eqref{eq:exponent-equivalence}. The converse implications follow from the
next proposition.

\begin{proposition}\label{prop:converse}
Fix a real number $\theta$ with $0\leq\theta<1$.
Suppose that, for every $\eps>0$,
\[
 \FF(n)-\frac{\pi}{8}n=O(n^{\theta+\eps})
 \qquad (n\text{ odd}).
\]
Then $\Delta_C(x)=O(x^{\theta+\eps})$ for every $\eps>0$.
Suppose instead that, for every $\eps>0$,
\[
 \FF(n)-\frac{3\log 2}{4}n=O(n^{\theta+\eps})
 \qquad (n\text{ even}).
\]
Then $\Delta_H(x)=O(x^{\theta+\eps})$ for every $\eps>0$.
\end{proposition}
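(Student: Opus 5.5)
The plan is to run the proof of Theorem~\ref{thm:main} backwards. That proof expresses $\FF(n)$, up to $O(n^\eps)$, as a fixed linear combination of dilates of $N_C$ or $N_H$. A hypothesis on $\FF$ therefore bounds a difference operator applied to $\Delta_C$ or $\Delta_H$, and what remains is to invert that operator. Two technical points need care: the hypothesis only constrains integers of one parity, and the inversion must not lose more than a power of $\log x$.

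\emph{Odd case.} Section~4 gives
\[
\FF(n)=\tfrac14\bigl(N_C(n)-N_C(n/2)\bigr)+O(n^\eps)
\]
for odd $n$. Hence the hypothesis yields $D(n):=N_C(n)-N_C(n/2)=\frac{\pi}{2}n+O(n^{\theta+\eps})$ for odd $n$.

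To pass from odd integers to all real $x\ge1$, I use $r_2(2j)=r_2(j)$. Splitting $\sum_{j\le x}r_2(j)$ by parity gives
\[
D(x)=1+\sum_{j\le x,\ j\text{ odd}}r_2(j).
\]
This is a step function that jumps only at odd integers. So $D(x)=D(n)$, where $n$ is the largest odd integer $\le x$, and replacing $\frac{\pi}{2}n$ by $\frac{\pi}{2}x$ costs $O(1)$. Thus $\Delta_C(x)-\Delta_C(x/2)=O(x^{\theta+\eps})$ for all real $x\ge1$.

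I then telescope along $x,x/2,\dots,x/2^K$, stopping when $x/2^K\in[1,2)$, where $\Delta_C$ is bounded. There are $O(\log x)$ terms, each $O(x^{\theta+\eps})$, so $\Delta_C(x)=O(x^{\theta+\eps}\log x)$. The factor $\log x$ is absorbed by enlarging $\eps$.

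\emph{Even case.} Section~5 gives, with $y=n/2$ (any integer $y\ge2$),
\[
\FF(2y)=2N_H(y)-5N_H(y/2)+2N_H(y/4)+O(y^\eps).
\]
Since $N_H(y)=N_H(\lfloor y\rfloor)$ and $\lfloor y/2^i\rfloor=\lfloor\lfloor y\rfloor/2^i\rfloor$, the right-hand side is unchanged when $y$ is replaced by $\lfloor y\rfloor$. The main terms $x\log x+(2\gamma-1)x$ then move by only $O(\log y)$. Since the linear terms cancel exactly as computed in Section~5, the hypothesis gives
\[
E(y):=2\Delta_H(y)-5\Delta_H(y/2)+2\Delta_H(y/4)=O(y^{\theta+\eps})
\qquad\text{for all real }y\ge4 .
\]
With the dilation $Sf(y)=f(y/2)$, this says $(2-S)(1-2S)\Delta_H=E$.

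\emph{Inversion, the main obstacle.} The factor $1-2S$ cannot be inverted downward, because that would produce the divergent series $\sum 2^iE(y/2^i)$. I therefore invert it upward, using an a priori bound. Put $B=(2-S)\Delta_H$, so that $B(2y)-2B(y)=E(2y)$. Iterating gives
\[
B(y)=2^{-K}B(2^Ky)-\sum_{i=1}^{K}2^{-i}E(2^iy).
\]
Dirichlet's bound $\Delta_H(z)=O(z^{1/2})$ gives $B(z)=O(z^{1/2})$, so the first term is $O(2^{-K/2}y^{1/2})\to0$. The series is $\ll y^{\theta+\eps}\sum_i 2^{-i(1-\theta-\eps)}$, which converges because $\theta<1$ (take $\eps<1-\theta$). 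Hence $B(y)=O(y^{\theta+\eps})$.

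Finally, $\Delta_H(y)=\frac12\bigl(B(y)+\Delta_H(y/2)\bigr)$ can be iterated downward to $y/2^K\in[1,2)$, where $\Delta_H$ is bounded. This gives $\Delta_H(y)\ll\sum_i 2^{-i}(y/2^i)^{\theta+\eps}+O(1)=O(y^{\theta+\eps})$. The same upward-inversion device would also work in the odd case, but the telescoping argument there is simpler.
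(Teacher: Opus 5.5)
Your proof is correct and is essentially the paper's argument. You reverse the profile formulas of Theorem~\ref{thm:main}, telescope $\Delta_C(x)-\Delta_C(x/2)$, and split $2-5S+2S^2$ into a factor inverted downward by contraction and the factor $1-2S$ inverted upward with an a priori sublinear bound on $\Delta_H$; the paper performs the same two inversions in the opposite order, with $B(m)=\Delta_H(2m)-2\Delta_H(m)$ and $\Delta_H(x)=o(x)$. The remaining differences are cosmetic. You pass to real arguments at the start via $r_2(2j)=r_2(j)$ and $N_H(y)=N_H(\lfloor y\rfloor)$, whereas the paper works at the integers $n=2m+1$ and $n=8m$ and interpolates with increment bounds $O(j^\eps)$. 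Also, $N_C(x)-N_C(x/2)$ equals $\sum_{j\le x,\,j\text{ odd}}r_2(j)$ without your extra $1$, which affects nothing.
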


\begin{proof}
Assume first the bound along odd indices. Formula \eqref{eq:odd-main}, applied
with $n=2m+1$, gives
\[
 \Delta_C(2m+1)-\Delta_C\left(m+\frac12\right)
 =O(m^{\theta+\eps}).
\]
Since a sum of two integer squares is an integer,
$N_C(m+1/2)=N_C(m)$. Hence
$\Delta_C(m+1/2)=\Delta_C(m)-\pi/2$. Jacobi's identity and the
divisor bound also give
\[
 \Delta_C(j)-\Delta_C(j-1)=r_2(j)-\pi=O(j^\eps).
\]
These estimates yield, for every integer $j\geq 2$,
\[
 \Delta_C(j)=\Delta_C\left(\left\lfloor\frac j2\right\rfloor\right)
 +O(j^{\theta+\eps}).
\]
Iteration gives $\Delta_C(j)=O(j^{\theta+\eps})$. The same estimate holds
for real $x$, since $N_C(x)=N_C(\lfloor x\rfloor)$.

Assume next the bound along even indices. Formula \eqref{eq:even-main}, with
$n=8m$, gives
\[
 2\Delta_H(4m)-5\Delta_H(2m)+2\Delta_H(m)
 =O(m^{\theta+\eps}).
\]
For every positive integer $m$, set
\[
 B(m)=\Delta_H(2m)-2\Delta_H(m).
\]
In terms of $B$, the preceding bound is
\[
 2B(2m)-B(m)=O(m^{\theta+\eps}).
\]
The divisor bound and \eqref{eq:def-hyperbola} imply
\[
 \Delta_H(m+1)-\Delta_H(m)=O(m^\eps),
\]
so that $B(m+1)-B(m)=O(m^\eps)$. The two bounds for $B$ combine to give
\[
 B(j)=\frac12B\left(\left\lfloor\frac j2\right\rfloor\right)
 +O(j^{\theta+\eps})
\]
for every integer $j\geq 2$. Binary iteration shows that
\begin{equation}\label{eq:B-bound}
 B(j)=O(j^{\theta+\eps}).
\end{equation}

Fix $\eps>0$, and choose $\eta$ with
$0<\eta<\eps$ and $\theta+\eta<1$. For every positive integer $J$, the
definition of $B$ gives
\[
 \frac{\Delta_H(m)}m
 =\frac{\Delta_H(2^Jm)}{2^Jm}
 -\sum_{j=0}^{J-1}\frac{B(2^jm)}{2^{j+1}m}.
\]
The first term tends to zero as $J$ tends to infinity, since the known bounds
for the divisor error term imply $\Delta_H(x)=o(x)$. Estimate
\eqref{eq:B-bound}, used with the exponent $\theta+\eta$, makes the series
convergent and gives
\[
 \Delta_H(m)=O(m^{\theta+\eta})=O(m^{\theta+\eps}).
\]
This proves the required bound at integer arguments. Finally,
$N_H(x)=N_H(\lfloor x\rfloor)$, and the remaining main term varies by
$O(\log x)$ on an interval of length $1$. This extends the estimate from
integers to real $x$.
\end{proof}

This equivalence suggests a question for a wider family of fractional-part
sums.
Given an increasing sequence $A=(A_n)$ of positive integers, determine when
there is a periodic function $\omega$ on the positive integers, extended by
$\omega(0)=0$, such that, for every $\eps>0$,
\[
 \sum_{k\leq n}\left\{\frac{A_n}{A_k}\right\}
 =\sum_{k\leq n}
 \omega\left(\left\lfloor\frac nk\right\rfloor\right)
 +O(n^\eps).
\]
Lemma~\ref{lem:quotient} then turns the first difference of $\omega$ into a
divisor sum. The examples above yield the divisor function, its alternating
analogue, and the character $\chi_4$. Other residue laws may select different
divisor sums or representation functions.
Establishing such a law requires congruence information about the sequence,
not growth alone.

\section{Lucas numbers and a second-order extension}

The residue calculation extends to a one-parameter family of second-order
recurrences and to the associated Lucas sequences. Fix a positive integer
$c$. Define the sequences
$(F_j^{(c)})_{j\geq 0}$ and
$(L_j^{(c)})_{j\geq 0}$ by
\begin{align*}
 F_0^{(c)}&=0,&F_1^{(c)}&=1,&
 F_{j+2}^{(c)}&=cF_{j+1}^{(c)}+F_j^{(c)},\\
 L_0^{(c)}&=2,&L_1^{(c)}&=c,&
 L_{j+2}^{(c)}&=cL_{j+1}^{(c)}+L_j^{(c)}
 \qquad (j\geq 0).
\end{align*}
For $c=1$, these are the Fibonacci and Lucas sequences. For every positive
integer $n$, put
\[
 \FF_c(n)=\sum_{k=1}^{n}\left\{\frac{F_n^{(c)}}{F_k^{(c)}}\right\},
 \qquad
 \LL_c(n)=\sum_{k=1}^{n}\left\{\frac{L_n^{(c)}}{L_k^{(c)}}\right\}.
\]

\begin{theorem}\label{thm:second-order}
Let $c$ be a positive integer and let $\eps>0$. For every odd integer
$n\geq 3$, one has
\begin{align}
 \FF_c(n)&=\frac{\pi}{8}n
 +\frac14\left(\Delta_C(n)-\Delta_C\left(\frac n2\right)\right)
 +O(n^\eps),\label{eq:general-F-odd}\\
 \LL_c(n)&=\frac{3\log 2}{4}n
 +\Delta_H(n)-3\Delta_H\left(\frac n2\right)
 +3\Delta_H\left(\frac n4\right)-2\Delta_H\left(\frac n8\right)
 +O(n^\eps).\label{eq:general-L-odd}
\end{align}
For every even integer $n\geq 4$, one has
\begin{align}
 \FF_c(n)&=\frac{3\log 2}{4}n
 +2\Delta_H\left(\frac n2\right)-5\Delta_H\left(\frac n4\right)
 +2\Delta_H\left(\frac n8\right)+O(n^\eps),
 \label{eq:general-F-even}\\
 \LL_c(n)&=\frac{\pi}{8}n
 +\frac14\Delta_C\left(\frac n2\right)+O(n^\eps).
 \label{eq:general-L-even}
\end{align}
\end{theorem}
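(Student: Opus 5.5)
The plan is to repeat the argument of Sections~3--5 with the Fibonacci congruences replaced by their analogues for $F^{(c)}$ and $L^{(c)}$. All terms with $k\le 2$ are $O(1)$, so it suffices to treat $k\geq 3$ and write $n=qk+r$ with $0\le r<k$.

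\emph{The sequences $F^{(c)}$.} Take $Q=\begin{pmatrix}c&1\\1&0\end{pmatrix}$, whose powers are $\begin{pmatrix}F^{(c)}_{k+1}&F^{(c)}_k\\F^{(c)}_k&F^{(c)}_{k-1}\end{pmatrix}$. The addition formula $F^{(c)}_{a+b}=F^{(c)}_{a-1}F^{(c)}_b+F^{(c)}_aF^{(c)}_{b+1}$, the Cassini identity $F^{(c)}_{k-1}F^{(c)}_{k+1}-(F^{(c)}_k)^2=(-1)^k$, and the determinant identity $F^{(c)}_{k-1}F^{(c)}_r-F^{(c)}_kF^{(c)}_{r-1}=(-1)^{r-1}F^{(c)}_{k-r}$ all hold with the same signs as for $c=1$. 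Hence lemma~\ref{lem:fib-congruence} holds verbatim for $F^{(c)}$.

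The error analysis in lemma~\ref{lem:profiles} carries over with $\varphi$ replaced by $\alpha_c=(c+\sqrt{c^2+4})/2\geq\varphi$. This gives the same profiles $w,u,v$, and Sections~4--5 then yield \eqref{eq:general-F-odd} and \eqref{eq:general-F-even} unchanged.

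\emph{The sequences $L^{(c)}$: congruences.} I would use the identity
\[
L_{a+b}=L_aL_b-(-1)^bL_{a-b},
\]
which follows from the Binet form with $\alpha\beta=-1$. Taking $b=k$ gives $L_{m+k}\equiv(-1)^{k+1}L_{m-k}\pmod{L_k}$, and for $m=k+r$ this gives $L_{k+r}\equiv(-1)^{r+1}L_{k-r}$. Iterating, I expect
\[
L_{2sk+r}\equiv(-1)^{s(k+1)}L_r,\qquad L_{(2s+1)k+r}\equiv(-1)^{s(k+1)+r+1}L_{k-r}\pmod{L_k}.
\]
The degenerate cases are as follows. For $q$ odd and $r=0$, $L_k\mid L_n$, so the term vanishes. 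For $q$ even and $r=0$, the residue is $\pm2$, so the term is within $O(1/L_k)$ of $0$ or $1$; these terms number at most $\tau(n)$. As in lemma~\ref{lem:profiles}, the ratios $L_r/L_k\ll\alpha_c^{-(k-r)}$ and $L_{k-r}/L_k\ll\alpha_c^{-r}$, combined with $k\mid n\pm h$, give a total error of $O(n^\eps)$ uniformly in $c$.

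\emph{The sequences $L^{(c)}$: profiles.} A term is close to $1$ exactly when the sign is $-1$. For $q=2s$ the sign is $(-1)^{s(k+1)}$: it is always $+1$ for $k$ odd, and is $-1$ for $k$ even with $q\equiv2\pmod 4$. For $q=2s+1$ the sign exponent is $s(k+1)+r+1$ with $r=n-qk$, and I split by the parities of $n$ and $k$.
\begin{itemize}
\item $n$ even, $k$ odd: then $r$ is odd iff $q$ is odd, so the sign is always $+1$. For $k$ even the sign is $-1$ iff $s$ is odd, i.e.\ $q\equiv3\pmod 4$. Together with the even-$q$ case, the profile is $1_{k\text{ even}}\,v(q)$. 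Its sum over even $k=2j$ is $\sum_{j\le n/2}v(\lfloor (n/2)/j\rfloor)$, and I expect this to reproduce $\frac14(N_C(n/2)-1)+O(n^\eps)$ through the identity $\sum_{k\le x}v(\lfloor x/k\rfloor)=U(x/2)$ and its relation to $w$. This should give \eqref{eq:general-L-even}.
\item $n$ odd: here $r\equiv1+q$ when $k$ is odd and $r$ is odd when $k$ is even. The resulting profile should be $u$ or $v$ on the even $k$ and another $4$-periodic function on the odd $k$. Its sum is then assembled from $U(x)=N_H(x)-2N_H(x/2)$ at $x=n,n/2,n/4,n/8$, giving \eqref{eq:general-L-odd}.
\end{itemize}

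The main obstacle is this last bookkeeping step. One must identify exactly which $4$-periodic profile arises in each parity class of $k$, then separate the odd-$k$ and even-$k$ contributions through the rescalings $x\mapsto x/2$ used in Sections~4--5, and check that the linear terms combine to $\frac{3\log2}{4}n$ or $\frac{\pi}{8}n$ with the stated coefficients.

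In the even-$n$ Lucas case there is an extra point to check. The quotient lemma applied to $v$ yields divisor-type sums, whereas the claimed answer is a circle term, so the relevant profile is really $w$ evaluated along $k=2j$. I would recompute the sign cases carefully there, and also verify the final formulas numerically for small $c$ as a check before writing the proof.
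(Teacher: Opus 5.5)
Your treatment of $F^{(c)}$ and your two Lucas congruences agree with the paper. However, the one genuinely new step, determining the Lucas profiles, contains a sign error for even $n$ and is not carried out for odd $n$. There is also a small slip earlier: $L_{k+r}\equiv(-1)^{r+1}L_{k-r}$ comes from $L_{a+b}=L_aL_b-(-1)^bL_{a-b}$ with $a=k$, $b=r$. Taking $b=k$, $m=k+r$ instead gives $L_{2k+r}\equiv(-1)^{k+1}L_r$.

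\textbf{Even $n$.} Take $k$ even and $q=2s+1$. The remainder $r=n-qk$ is even, so the exponent $s(k+1)+r+1$ is congruent to $s+1$, not $s$, modulo $2$. The sign is therefore $-1$ exactly when $q\equiv1\pmod 4$. For example, $n=12$, $k=8$ has $q=1$, $r=4$, and $L_{12}=322\equiv-7\equiv40\pmod{47}$. This term is near $1$, but $v(1)=0$ would miss it. Combined with $q\equiv2\pmod4$ from the even quotients, the profile on even $k$ is $w$, not $v$. It vanishes on odd $k$, as you found. Hence $\LL_c(n)=\sum_{j\le n/2}w(\lfloor (n/2)/j\rfloor)+O(n^\eps)=\frac14\bigl(N_C(n/2)-1\bigr)+O(n^\eps)$ by \eqref{eq:w-circle}. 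No ``relation to $w$'' can rescue the $v$ version: the $v$-sum over $k=2j$ equals $U(n/4)$, whose main term is $\frac{\log 2}{4}n$ rather than $\frac{\pi}{8}n$. You sensed this tension at the end, but as written the proposal asserts the wrong profile and an identity that does not hold.

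\textbf{Odd $n$.} You only predict that some $4$-periodic profiles arise, so \eqref{eq:general-L-odd} is not yet proved. The missing computation is short.
For $k$ odd, $r\equiv q+1\pmod 2$. Odd quotients then have $r$ even and an odd exponent, hence sign $-1$. Even quotients have sign $+1$. This profile is $u$.
For $k$ even, $r$ is odd. Both even and odd quotients then have sign $(-1)^{\lfloor q/2\rfloor}$, which is $-1$ exactly for $q\equiv2,3\pmod4$. This profile is $v$.
The odd-$k$ part is $U(n)-U(n/2)$ and the even-$k$ part is $U(n/4)$. By \eqref{eq:U}, the total $U(n)-U(n/2)+U(n/4)$ equals $N_H(n)-3N_H(n/2)+3N_H(n/4)-2N_H(n/8)$, with linear term $(1-\frac12+\frac14)n\log 2$.

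With these two case analyses supplied, your argument coincides with the paper's.
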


In particular, the ordinary Lucas sequence reverses the Fibonacci parity
pattern. Its odd indices select the Dirichlet divisor error term, while its
even indices select the Gauss circle error term.

\begin{proof}
Set
\[
 M_c=\begin{pmatrix}c&1\\1&0\end{pmatrix}.
\]
For every positive integer $k$, one has
\[
 M_c^k=
 \begin{pmatrix}
  F_{k+1}^{(c)}&F_k^{(c)}\\
  F_k^{(c)}&F_{k-1}^{(c)}
 \end{pmatrix}
 \equiv F_{k-1}^{(c)}
 \begin{pmatrix}1&0\\0&1\end{pmatrix}
 \pmod {F_k^{(c)}}.
\]
The determinant of $M_c$ is $-1$. For $1\leq r<k$, Cassini's identity and
the determinant identity used in lemma~\ref{lem:fib-congruence} take the forms
\begin{align*}
 F_{k+1}^{(c)}F_{k-1}^{(c)}-(F_k^{(c)})^2&=(-1)^k,\\
 F_{k-1}^{(c)}F_r^{(c)}-F_k^{(c)}F_{r-1}^{(c)}
 &=(-1)^{r-1}F_{k-r}^{(c)}.
\end{align*}
The congruences in that lemma consequently hold with $F_j$ replaced by
$F_j^{(c)}$.

The corresponding Lucas congruences take the following form. Let $k\geq 2$,
$s\geq 0$, and $0\leq r<k$. Then
\begin{align}
 L_{2sk+r}^{(c)}&\equiv
 (-1)^{s(k+1)}L_r^{(c)}\pmod {L_k^{(c)}},
 \label{eq:Lucas-even-q}\\
 L_{(2s+1)k+r}^{(c)}&\equiv
 (-1)^{s(k+1)+r+1}L_{k-r}^{(c)}\pmod {L_k^{(c)}}
 \quad (r>0).
 \label{eq:Lucas-odd-q}
\end{align}
To prove these formulas, let $\alpha$ and $\beta$ be the roots of
$z^2-cz-1$. Thus $\alpha\beta=-1$ and
$L_j^{(c)}=\alpha^j+\beta^j$. For fixed $k$ and $r$, the terms
$L_{qk+r}^{(c)}$, viewed as a sequence in $q$, satisfy
\[
 L_{(q+2)k+r}^{(c)}
 =L_k^{(c)}L_{(q+1)k+r}^{(c)}-(-1)^kL_{qk+r}^{(c)}.
\]
Reduction modulo $L_k^{(c)}$ and iteration over the even values of $q$ give
\eqref{eq:Lucas-even-q}. For odd values of $q$, use
\[
 L_{k+r}^{(c)}=L_k^{(c)}L_r^{(c)}-(-1)^rL_{k-r}^{(c)},
\]
which proves \eqref{eq:Lucas-odd-q}. When $r=0$ and the quotient is odd,
$L_k^{(c)}$ divides the numerator.

The dominant root is
\[
 \alpha=\frac{c+\sqrt{c^2+4}}{2}>1,
\]
while $|\beta|=\alpha^{-1}$. Since $c\geq 1$, one has $\alpha\geq\varphi$.
The estimates in the proof of lemma~\ref{lem:profiles} remain valid for the
ratios of generalized Fibonacci and Lucas numbers, with $\varphi$ replaced
by $\alpha$. Since $\alpha\geq\varphi$, their constants may be chosen
independently of $c$, and the total error is $O(n^\eps)$.

The generalized Fibonacci congruences give the same two profiles as
\eqref{eq:odd-profile} and \eqref{eq:even-profile}. Hence
\eqref{eq:general-F-odd} and \eqref{eq:general-F-even} follow from the
proof of theorem~\ref{thm:main}.

For the generalized Lucas sequence, \eqref{eq:Lucas-even-q} and
\eqref{eq:Lucas-odd-q} give
\begin{align*}
 \LL_c(n)&=
 \sum_{\substack{k\leq n\\k\text{ even}}}
 w\left(\left\lfloor\frac nk\right\rfloor\right)
 +O(n^\eps)
 &&(n\text{ even}),\\
 \LL_c(n)&=
 \sum_{\substack{k\leq n\\k\text{ odd}}}
 u\left(\left\lfloor\frac nk\right\rfloor\right)
 +\sum_{\substack{k\leq n\\k\text{ even}}}
 v\left(\left\lfloor\frac nk\right\rfloor\right)
 +O(n^\eps)
 &&(n\text{ odd}).
\end{align*}
Writing $k=2j$ in the first profile and applying \eqref{eq:w-circle} at
$x=n/2$ gives
\[
 \LL_c(n)=\frac14\left(N_C\left(\frac n2\right)-1\right)
 +O(n^\eps).
\]
For odd $n$, the $u$-sum over all $k\leq n$ equals $U(n)$ and its even
values $k=2j$ contribute $U(n/2)$, so the sum over odd $k$ is
$U(n)-U(n/2)$. The sum over the even values $k=2j$ involving $v$ is
$U(n/4)$. Equation \eqref{eq:U} therefore gives
\begin{align*}
 \LL_c(n)
 &=U(n)-U\left(\frac n2\right)+U\left(\frac n4\right)
 +O(n^\eps)\\
 &=N_H(n)-3N_H\left(\frac n2\right)+3N_H\left(\frac n4\right)
 -2N_H\left(\frac n8\right)+O(n^\eps).
\end{align*}
Substitution of \eqref{eq:def-hyperbola} and \eqref{eq:def-circle} proves
\eqref{eq:general-L-odd} and \eqref{eq:general-L-even}.
\end{proof}

Theorem~\ref{thm:second-order} does not cover every recurrence of order two
with integer coefficients. Its proof uses both the determinant $-1$ of $M_c$
and the chosen initial values. If the coefficient of the older term
differs from $1$, a matrix power need not reduce to a scalar sign. Arbitrary
initial values can also destroy the divisibility relations. The residue law
modulo the denominator, rather than the order of the recurrence, governs the
resulting fractional-part sum.

The first higher-order case may behave differently. Define the
Tribonacci sequence by
\[
 T_1=1,\qquad T_2=1,\qquad T_3=2,\qquad
 T_{j+3}=T_{j+2}+T_{j+1}+T_j\quad (j\geq 1),
\]
and put
\[
 \TT(n)=\sum_{k=1}^{n}\left\{\frac{T_n}{T_k}\right\}.
\]
Direct computations using the exact remainders $T_n\bmod T_k$, for
$n\leq 4000$, suggest that $\TT(n)/n$ is close to $1/2$. The three residue
classes of $n$ modulo $3$ have the same apparent leading coefficient over
this range. A trichotomy could still occur in the remainder, or depend on
$\lfloor n/k\rfloor$ rather than on $n$. No asymptotic formula is claimed
here.

The Tribonacci companion matrix is $3\times3$. A direct reduction modulo
$T_k$ involves several residues rather than a scalar sign. Any analogue of the
quotient profiles above must account for this additional residue data. Such
profiles may lead to counting functions other than those in the Dirichlet
divisor and Gauss circle problems. It remains open whether
$\TT(n)=n/2+o(n)$ and what arithmetic object, if any, controls the remainder.

\end{document}